\documentclass[11pt,reqno]{amsart}

\usepackage{amscd,amssymb,amsmath,amsthm}
\usepackage[arrow,matrix]{xy}
\usepackage{graphicx}
\usepackage{epstopdf}
\usepackage{color}
\topmargin=0.1in
\textwidth6.0in
\textheight7.8in

\newtheorem{thm}{Theorem}
\newtheorem{defn}{Definition}

\newtheorem{rk}{Remark}

\numberwithin{equation}{section} \setcounter{tocdepth}{1}



\begin{document}

\title[Quadratic stochastic operator with two discontinuity points]{Dynamical system of a quadratic stochastic operator with two discontinuity points}

\author{ Sh.B. Abdurakhimova, U.A. Rozikov}

 \address{U.\ A.\ Rozikov \begin{itemize}
 \item[] V.I.Romanovskiy Institute of Mathematics of Uzbek Academy of Sciences;
\item[] AKFA University, 1st Deadlock 10, Kukcha Darvoza, 100095, Tashkent, Uzbekistan;
\item[] Faculty of Mathematics, National University of Uzbekistan.
\end{itemize}} \email {rozikovu@yandex.ru}

\address{Sh.B. Abdurakhimova. Namangan State University, Namangan,  Uzbekistan.}
\email {shakhnoza.karimova95@mail.ru}

\begin{abstract} In this paper we consider a population consisting of two species, dynamics of which is defined by a quadratic stochastic operator with
variable coefficients, making it discontinuous operator at two points. This operator depends on three parameters.
It is shown that under suitable conditions on the parameters this operator may have fixed points, convergence of trajectories and there may exist periodic points.
\end{abstract}

\keywords{Dynamical systems; fixed point; periodic point; limit point.} \subjclass[2020]{92D25 (37C25;\, 37E05)} \maketitle

\section{Introduction}

The concept of quadratic stochastic operator (QSO) was first introduced by Bernstein. A lot of papers were devoted to study such operators (see
for example \cite{GMR}-\cite{RZ1} and references therein). The theory of QSO frequently arises in many models of physics, biology, economics and
so on.

Let us give some necessary definitions (see \cite{D}, \cite{Rb}). In order to define a discrete-time dynamical system consider a function
$f:X\rightarrow{X}$.
    For $x \in {X}$ denote by $f^{n}{(x)}$ the $n$-fold composition of $f$ with itself:
\begin{center}\text{$f^{n}(x)=\underbrace{f(f(...f(x)...))}_{n}$}
\end{center}
\begin{defn}For arbitrary given $x^{(0)}\in {X}$ and $f:X\rightarrow{X}$ the discrete-time dynamical system (also called
forward orbit or trajectory of $x^{(0)}$) is the sequence of points
\begin{equation}\label{1.1}
x^{(0)}, x^{(1)}=f(x^{(0)}), x^{(2)}=f^{2}(x^{(0)}), x^{(3)}=f^{3}(x^{(0)}),\dots
\end{equation}
\end{defn}
\begin{defn}A point $x\in X$ is called a fixed point for $f:X\rightarrow X$ if $f(x)=x$. The point $x$ is called a
periodic point of period $p$ if $f^{p}(x)=x$. The least positive $p$ for which $f^{p}(x)=x$ is called the prime period of $x$.
\end{defn}

    It is clear that the set of all iterates of a periodic point form a periodic sequence (orbit).

    There are three kinds of periodic points: attracting, repelling and indifferent. Let $x^{*}$ be a $p$-periodic point. If
    $|(f^{p}{(x^{*})})'|<1$, $x^{*}$-attracting; $|(f^{p}{(x^{*})})'|>1$, $x^{*}$-repelling; $|(f^{p}{(x^{*})})'|=1$,
    $x^{*}$-indifferent.

Consider a biological population, that is, a community of
organisms closed with respect to reproduction. Let $S^{m-1}$ be the simplex:
\begin{equation}\label{1.2}
S^{m-1}=\{x=(x_{1},...,x_{m})\in \mathbb R^{m}: x_{i}\geq0, \sum\limits_{i=1}^{m} x_{i}=1\}.
\end{equation}
Assume that every individual in this population belongs to one of the species $1,2,...,m$. Let
$x^{0}=(x_{1}^{0},...,x_{m}^{0})\in{S^{m-1}}$ be the probability distribution (where $x_{i}^{0}=P(i)$ is the probability of
$i=1,2,...,m$) of species in the initial generation, and $P_{ij,k}$ the probability that individuals in the $i$th and $j$th species
interbreed to produce an individual $k$, more precisely $P_{ij,k}$ is the conditional probability $P(k|i,j)$ that $i$th and $j$th species
interbred successfully, then they produce an individual $k$.

Assume the parents $ij$ are independent i.e., $P(i,j)=P(i)P(j)=x_{i}^{0}x_{j}^{0}$. Then the probability distribution $x_k'$ of the
species in the first generation can be found by the total probability
\begin{equation}\label{1.3}
x_k'=\sum_{i,j=1}^{m}P(k|i,j)P(i,j)=\sum_{i,j=1}^{m}P_{ij,k}x_{i}^{0}x_{j}^{0},                   k=1,2,...,m
\end{equation}
This means that the association  $x^{0}\in S^{m-1}\rightarrow x'\in S^{m-1}$ defines a map $V$ called the \emph{evolution
operator}.

The states of the population described by the following discrete-time dynamical system
\begin{equation}\label{1.4}
x^{(0)}, x^{(1)}=V(x^{(0)}), x^{(2)}=V^{2}(x^{(0)}), ... , x^{(n)}=V^{n}(x^{(0)}),...
\end{equation}
where $V^{n}(x)=\underbrace{V(V(...V(x)...))}_{n}$ denotes  the $n$ times iteration of $V$ to $x$.

\textbf{The main problem} for a given dynamical system is to describe the limit points of $\{x^{(n)}\}_{n=0}^{\infty}$ for arbitrary
given $x^{(0)}$.

In this work we investigate the dynamics of a quadratic stochastic operator $V$ with variable coefficients, that is mapping $S^{1}$ into itself
(see \cite{RU}, \cite{UK} and references therein for motivations of such investigations).

 \section{A QSO with two discontinuity points}

 Consider a population of 2 species, i.e. $m=2$. Denote the set of species by $E=\{1, 2\}$.

    For a variable coefficient $p(x)$ define an evolution operator
$V_{a,b,c}:z=(x,y)\in S^{1}\rightarrow z'=(x',y')\in S^{1}$ as the following:
\begin{equation}\label{1.5}
V\equiv V_{a,b,c}:\left\{
\begin{array}{ll}
x'=x^{2}+2p(x)xy,\\[2mm]
y'=2(1-p(x))xy+y^{2}
\end{array}
\right.
\end{equation}
where
\\$$p(x)=\left\{
\begin{array}{ll}
a,\ \ x\leq \frac{1}{3},\\[2mm]
b,\ \  \frac{1}{3}<x<\frac{2}{3},\\[2mm]
c,\ \ x\geq \frac{2}{3}
\end{array}
\right.$$
with $a,b,c\in [0,1]$. We are interested to study the dynamical system generated by the evolution operator $V_{a,b,c}$.

Using $x+y=1$ the operator (\ref{1.5}) can be reduced to the mapping $f_{a,b,c}:[0,1]\rightarrow [0,1]$ defined by
\begin{equation}\label{2.1}
f_{a,b,c}(x)=\left\{
\begin{array}{ll}
(1-2a)x^{2}+2ax,\ \ x\leq \frac{1}{3},\\[2mm]
(1-2b)x^{2}+2bx,\ \  \frac{1}{3}<x<\frac{2}{3},\\[2mm]
(1-2c)x^{2}+2cx,\ \ x\geq \frac{2}{3}
\end{array}
\right.
\end{equation}
here $a,b,c\in [0,1]$.
\begin{figure}
\includegraphics[width=9cm]{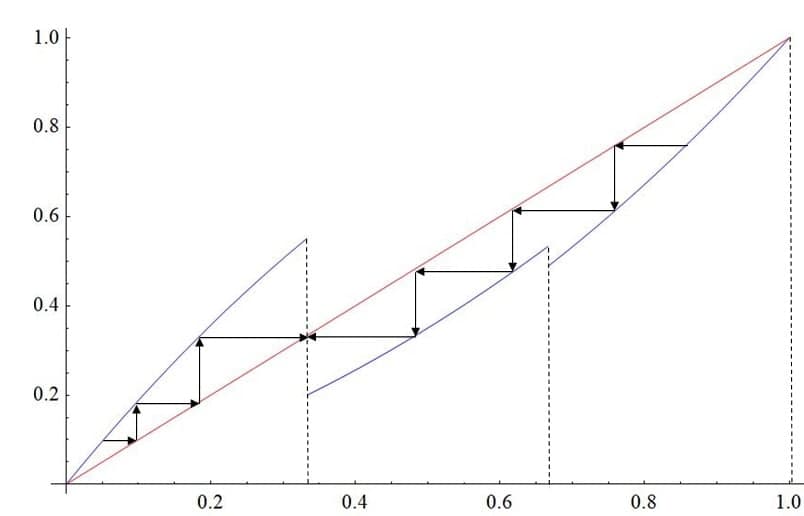}
\caption{The graph of the function (\ref{2.1}) for $a>1/2$ and $b<1/2$, $c<1/2$. Some trajectories are shown.}\label{f1}
\end{figure}

 Note that in the case $a=b=c={1\over 2}$ the function is  trivial, because $ f_{a,b,c}(x)=x$, $x\in [0,1].$ Therefore, below
 we do not consider this case.

 In case  $a=b=c\ne {1\over 2}$, we have $f(x):=f_{a,a,a}(x)=(1-2a)x^{2}+2ax$, $x\in [0,1]$.
 The fixed points of this function are $0$ and $1$.
 
It is easy to see that

1) The point $0$ is attracting for $f$, if $a \in [0,\frac{1}{2})$; repelling if $a \in (\frac{1}{2},1]$.

2) The point $1$ is repelling for $f$, if $a \in [0,\frac{1}{2})$; attracting if $a\in (\frac{1}{2},1]$.

Moreover, using monotonicity of $f$ one can show that for any initial point $x^{(0)}\in (0,1)$ the following limits hold
$$\lim_{n\to \infty}f^n(x^{(0)})=\left\{\begin{array}{ll}
0, \ \ \mbox{if} \ \ a \in [0,\frac{1}{2})\\[2mm]
1, \ \ \mbox{if} \ \ a \in (\frac{1}{2},1].
\end{array}
\right.
$$

 Thus value $1/2$ is a critical point where the character of the dynamical system changes.
 Therefore, we consider the following possible cases (see Fig. \ref{f1}):\\

1. $a\neq\frac{1}{2}$, $b=\frac{1}{2}$, $c=\frac{1}{2}$

2. $a=\frac{1}{2}$, $b\neq\frac{1}{2}$, $c=\frac{1}{2}$

3. $a=\frac{1}{2}$, $b=\frac{1}{2}$, $c\neq\frac{1}{2}$

4. $a\neq\frac{1}{2}$, $b\neq\frac{1}{2}$, $c=\frac{1}{2}$

5. $a\neq\frac{1}{2}$, $b=\frac{1}{2}$, $c\neq\frac{1}{2}$

6. $a=\frac{1}{2}$, $b\neq\frac{1}{2}$, $c\neq\frac{1}{2}$

7. $a\neq\frac{1}{2}$, $b\neq\frac{1}{2}$, $c\neq\frac{1}{2}$\\

\textbf{The case:} $a\neq\frac{1}{2}$, $b=\frac{1}{2}$, $c=\frac{1}{2}$.

In this case the function is
\begin{equation}\label{2.2}
f_{a}(x)=\left\{
\begin{array}{ll}
(1-2a)x^{2}+2ax,\ \ x\in [0,\frac{1}{3}],\\[2mm]
x,\ \ x\in (\frac{1}{3},1]
\end{array}
\right.
\end{equation}

\begin{thm}\label{t21} For the dynamical system generated by (\ref{2.2}) the following assertions hold
\begin{itemize}
\item[1.] The set of fixed points  is ${\rm Fix}(f_a)=\{0\}\cup (\frac{1}{3},1]$;

\item[2.] If $0\leq a <\frac{1}{2}$ then $\lim_{n\rightarrow\infty}x^{(n)}
=\lim_{n\rightarrow\infty}f_a^n(x^{(0)})=0$, for any $x^{(0)}\in [0,\frac{1}{3}]$;

\item[3.] If $\frac{1}{2}<a\leq1$ then for any $ x^{(0)}\in (0,\frac{1}{3}]$,
there exist $n\in \mathbb N$ and $p \in (\frac{1}{3};\frac{1}{9}(4a+1)]$
such that $f^{n}(x^{(0)})=p, f^{n+1}(x^{(0)})=f(p)=p$.
\end{itemize}
\end{thm}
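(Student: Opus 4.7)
The plan is to handle the three parts in order, using the explicit piecewise form of $f_a$ and the fact that the right branch is the identity.

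For Part 1, I would simply solve $f_a(x)=x$ on each piece. On $(\tfrac13,1]$ every point is a fixed point by definition. On $[0,\tfrac13]$ the equation $(1-2a)x^2+2ax=x$ factors as $(1-2a)x(x-1)=0$, whose solutions in $[0,\tfrac13]$ are only $x=0$ (since $a\neq\tfrac12$ and $1\notin[0,\tfrac13]$). This gives ${\rm Fix}(f_a)=\{0\}\cup(\tfrac13,1]$.

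For Part 2, the key observation is the sign of $f_a(x)-x=(1-2a)x(x-1)$ on $(0,\tfrac13]$: when $a<\tfrac12$ the three factors give a strictly negative product, so $f_a(x)<x$. I would also check that $f_a$ maps $[0,\tfrac13]$ into itself by computing $f_a(\tfrac13)=\tfrac{1+4a}{9}<\tfrac13$ when $a<\tfrac12$ and using monotonicity of $f_a$ on $[0,\tfrac13]$ (the derivative $2a+2(1-2a)x$ is positive at both endpoints of $[0,\tfrac13]$ and linear). Then $\{f_a^n(x^{(0)})\}$ is monotone decreasing and bounded below by $0$, hence converges to some limit in $[0,\tfrac13]$, which must be a fixed point there, i.e.\ $0$.

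For Part 3, the same factorization shows that for $a>\tfrac12$ and $x\in(0,\tfrac13]$ one has $f_a(x)>x$, so the orbit starting at $x^{(0)}$ is strictly increasing as long as it stays in $[0,\tfrac13]$. I would argue by contradiction that it cannot stay there for all $n$: otherwise it would converge monotonically to a fixed point of $f_a$ inside $[0,\tfrac13]$, but the only such fixed point is $0$, contradicting monotone increase from $x^{(0)}>0$. Hence there is a smallest $n$ with $f_a^n(x^{(0)})>\tfrac13$; call this value $p$. Since $f_a^{n-1}(x^{(0)})\in[0,\tfrac13]$ and $f_a$ is increasing on $[0,\tfrac13]$, we have $p=f_a(f_a^{n-1}(x^{(0)}))\leq f_a(\tfrac13)=\tfrac{1+4a}{9}$, so $p\in(\tfrac13,\tfrac{1+4a}{9}]$. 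Finally $f_a(p)=p$ because $p$ lies in the identity branch.

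The main obstacle is mild: one must justify that $f_a\!\upharpoonright_{[0,1/3]}$ is increasing (to get the upper bound $\tfrac{1+4a}{9}$) and that the iterates really cannot stay in $[0,\tfrac13]$ forever; both reduce to elementary monotone-sequence arguments. No subtle analysis is needed beyond the factorization $f_a(x)-x=(1-2a)x(x-1)$ and the endpoint value $f_a(\tfrac13)=\tfrac{1+4a}{9}$.
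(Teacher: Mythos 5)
Your proposal is correct and follows essentially the same route as the paper: solve $f_a(x)=x$ piecewise for Part 1, then use monotonicity of the quadratic branch together with the sign of $f_a(x)-x=(1-2a)x(x-1)$ and the bound $f_a(\tfrac13)=\tfrac{1+4a}{9}$ to get monotone convergence to $0$ in Part 2 and escape into the identity (fixed-point) branch in Part 3. Your explicit contradiction argument for why the orbit cannot remain in $[0,\tfrac13]$ when $a>\tfrac12$ merely fills in a step the paper leaves implicit.
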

\begin{proof}

1. Follows from a simple analysis of the equation of $f_a(x)=x$.

2. The function $f_a(x)$ is monotone increasing and $f_a(x)<x$,  $0\leq f_a(x) \leq \frac{1}{9}(1+4a)$ for any point $x \in [0,\frac{1}{3}]$. Therefore the orbit $f^n(x)$ is decreasing and bounded from below by the fixed point $x=0$. Hence the limit of this sequence is $0$.

3. In this case the function is increasing, $f_a(x)>x$ and $0\leq f_a(x) \leq \frac{1}{9}(1+4a)$ for any point $x \in [0,\frac{1}{3}]$. So, by the $1^{st}$ part of the theorem any point $p\in (\frac{1}{3},1]$ is a fixed point for function $f_a(x)$. Consequently, there is $n$ such that $f^{n}(x^{(0)})=p, f^{n+1}(x^{(0)})=f(p)=p$.
\end{proof}

\textbf{The case:} $a=\frac{1}{2}$, $b\neq\frac{1}{2}$, $c=\frac{1}{2}$.
In this case the function is
\begin{equation}\label{2.3}
f_{b}(x)=\left\{
\begin{array}{ll}
x,\ \ x\in [0,\frac{1}{3}]\cup [\frac{2}{3},1],\\[2mm]
(1-2b)x^{2}+2bx,\ \ x\in (\frac{1}{3},\frac{2}{3})
\end{array}
\right.
\end{equation}

\begin{thm}\label{t22} The dynamical system generated by the function (\ref{2.3}) has the following properties
\begin{itemize}
\item[1.] The set of fixed points is ${\rm Fix}(f_b)=[0,\frac{1}{3}]\cup[\frac{2}{3},1]$;

\item[2.] If $0\leq b <\frac{1}{2}$ then for any $x^{(0)}\in (\frac{1}{3},\frac{2}{3})$,
there exist $n \in \mathbb N$ and $p \in [\frac{1}{9}(1+4b);\frac{1}{3}]$ such that
$f^{n}(x^{(0)})=p, f^{n+1}(x^{(0)})=f(p)=p$;

\item[3.] If $\frac{1}{2}<b\leq1$ then for any $x^{(0)}\in (\frac{1}{3},\frac{2}{3})$,
there exist $n \in \mathbb N$ and $p \in [\frac{2}{3};\frac{4}{9}(1+b)]$
such that $f^{n}(x^{(0)})=p, f^{n+1}(x^{(0)})=f(p)=p$.
\end{itemize}
\end{thm}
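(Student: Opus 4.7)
For part 1, I will just solve $f_b(x)=x$ piecewise. On $[0,1/3]\cup[2/3,1]$ every point is a fixed point by definition. On $(1/3,2/3)$ the equation $(1-2b)x^{2}+2bx=x$ rearranges to $(1-2b)\,x(x-1)=0$, and since $b\neq 1/2$ and $x\in(0,1)$ this has no solution. Hence $\mathrm{Fix}(f_b)=[0,1/3]\cup[2/3,1]$.

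For parts 2 and 3, the plan is to show on the middle interval that the trajectory is strictly monotone and cannot accumulate in $(1/3,2/3)$, so it must exit in finitely many steps and then land on a fixed point. The key algebraic identity is
\[
f_b(x)-x=(2b-1)\,x(1-x),\qquad x\in(1/3,2/3),
\]
which is negative when $b<1/2$ and positive when $b>1/2$. Combined with the fact that on this interval $f_b$ is the restriction of a parabola whose vertex lies outside $(1/3,2/3)$ (the coefficient of $x^2$ being $1-2b$ and the vertex at $b/(2b-1)$), $f_b$ is strictly monotone increasing on $(1/3,2/3)$ and maps it into the open interval with endpoints
\[
f_b\!\left(\tfrac{1}{3}\right)=\tfrac{1+4b}{9},\qquad f_b\!\left(\tfrac{2}{3}\right)=\tfrac{4+4b}{9}.
\]

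Now fix $x^{(0)}\in(1/3,2/3)$ and let $x^{(n)}=f_b^{\,n}(x^{(0)})$. Suppose first $b<1/2$. As long as $x^{(n)}\in(1/3,2/3)$, the identity above gives $x^{(n+1)}<x^{(n)}$, so $\{x^{(n)}\}$ is strictly decreasing. If it never left $(1/3,2/3)$, it would converge to some $L\in[1/3,2/3)$. By continuity of $f_b$ on $(1/3,2/3)$ any $L\in(1/3,2/3)$ would be a fixed point there, contradicting part 1; and the one-sided limit of $f_b$ at $1/3^{+}$ is $(1+4b)/9<1/3$, so $L=1/3$ is excluded by continuity of the iterate immediately preceding the limit. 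Hence there is a smallest $n$ with $x^{(n)}\leq 1/3$, and since $x^{(n)}=f_b(x^{(n-1)})\in\bigl((1+4b)/9,\,(4+4b)/9\bigr)$, this $p:=x^{(n)}$ lies in $[(1+4b)/9,1/3]$ and is fixed by part 1. The case $b>1/2$ is symmetric: the orbit is strictly increasing, cannot have a limit in $(1/3,2/3]$ (the same no-fixed-point argument, plus $f_b(2/3^{-})=(4+4b)/9>2/3$), so it must enter $[2/3,1]$ in finite time, landing in $[2/3,(4+4b)/9]$ by the same range bound.

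The only delicate step is ruling out the limit sitting at the boundary $1/3$ (resp.\ $2/3$) of $(1/3,2/3)$; this is not automatic from continuity of $f_b$ at that point (since $f_b$ is redefined there) but follows from continuity of the parabolic branch, whose one-sided limit at the boundary is strictly inside the adjacent fixed-point set, forcing the iterate one step before the limit to already be outside $(1/3,2/3)$. Everything else is routine.
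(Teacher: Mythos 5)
Your proof is correct and follows essentially the same route as the paper, which dispatches this theorem with ``similar to the proof of Theorem \ref{t21}'': monotonicity of the quadratic branch on $(\frac13,\frac23)$, the sign of $f_b(x)-x=(2b-1)x(1-x)$, and the fixed-point set from part 1. You in fact supply more detail than the paper does --- notably the argument that the orbit cannot accumulate at the boundary point $\frac13$ (resp.\ $\frac23$) and must leave $(\frac13,\frac23)$ in finitely many steps, landing in $[\frac{1+4b}{9},\frac13]$ (resp.\ $[\frac23,\frac{4(1+b)}{9}]$) --- which closes a step the paper leaves implicit.
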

\begin{proof} Is similar to the proof of Theorem \ref{t21}. \end{proof}

\textbf{The case:} $a=\frac{1}{2}$, $b=\frac{1}{2}$, $c\neq\frac{1}{2}$.
In this case the function is
\begin{equation}\label{2.4}
f_{c}(x)=\left\{
\begin{array}{ll}
x,\ \ x\in [0,\frac{2}{3}),\\[2mm]
(1-2c)x^{2}+2cx,\ \ x\in [\frac{2}{3},1]
\end{array}
\right.
\end{equation}

\begin{thm}\label{t23} For the dynamical system generated by function (\ref{2.4}) the following hold
\begin{itemize}
\item[1.] The set of fixed points is $Fix(f_c)=[0,\frac{2}{3})\cup\{1\}$;

\item[2.] If $0\leq c <\frac{1}{2}$ then for any $x^{(0)}\in [\frac{2}{3},1]$, there exist $n \in \mathbb N$ and $p \in [\frac{4}{9}(1+c);\frac{2}{3})$ such that $f^{n}(x^{(0)})=p, f^{n+1}(x^{(0)})=f(p)=p$;

\item[3.] If $\frac{1}{2}<c\leq1$ then $\lim_{n\to \infty}x^{(n)}=\lim_{n\rightarrow\infty}f_c^n(x^{(0)})=1$, $\forall x^{(0)}\in [\frac{2}{3},1]$.
\end{itemize}
\end{thm}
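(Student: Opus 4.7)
My plan is to handle the three parts in order, relying on the same monotonicity/invariance analysis that underlies Theorems \ref{t21} and \ref{t22}. For Part 1, I would just solve $f_c(x)=x$: on $[0,2/3)$ every point is automatically fixed, while on $[2/3,1]$ the equation $(1-2c)x^2+2cx=x$ factors as $(1-2c)x(x-1)=0$, and since $c\neq 1/2$ the only solution in $[2/3,1]$ is $x=1$. This gives ${\rm Fix}(f_c)=[0,2/3)\cup\{1\}$.

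For Part 2 ($0\le c<1/2$), I would first record the two elementary facts on $[2/3,1]$: (i) $f_c$ is increasing, because $f_c'(x)=2(1-2c)x+2c>0$ on $[2/3,1]$; and (ii) $f_c(x)-x=(1-2c)x(x-1)<0$ for $x\in(2/3,1)$, i.e.\ $f_c(x)<x$ strictly. In particular $f_c(2/3)=\tfrac{4}{9}(1+c)<\tfrac{2}{3}$. Now fix $x^{(0)}\in[2/3,1]$ (the case $x^{(0)}=1$ is trivial). Assume for contradiction that $x^{(n)}\in[2/3,1)$ for every $n$; then $\{x^{(n)}\}$ is strictly decreasing and bounded below by $2/3$, hence converges to some $L\in[2/3,1)$, which by continuity of $f_c$ on this interval would be a fixed point — but Part 1 rules out any fixed point in $[2/3,1)$. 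Therefore there is a smallest $n$ with $x^{(n)}\in[2/3,1)$ and $x^{(n+1)}<2/3$. Monotonicity of $f_c$ forces $x^{(n+1)}\ge f_c(2/3)=\tfrac{4}{9}(1+c)$, so $p:=x^{(n+1)}\in[\tfrac{4}{9}(1+c),\tfrac{2}{3})$, and by Part 1 this $p$ is fixed.

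For Part 3 ($1/2<c\le 1$), the same two computations reverse sign in the right way: $f_c$ is still increasing on $[2/3,1]$ (the derivative $2(1-2c)x+2c$ is linear decreasing in $x$ but equals $2-2c>0$ at $x=1$, hence is positive throughout), but now $f_c(x)-x=(1-2c)x(x-1)>0$ on $(2/3,1)$ and $f_c(2/3)=\tfrac{4}{9}(1+c)>\tfrac{2}{3}$. Consequently $f_c$ maps $[2/3,1]$ into itself, and for any $x^{(0)}\in[2/3,1]$ the orbit $\{x^{(n)}\}$ is nondecreasing and bounded above by $1$. Its limit is a fixed point of $f_c$ in $[2/3,1]$, which by Part 1 must be $1$.

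I do not expect a real obstacle here; the only slightly non-routine step is the contradiction argument in Part 2 that rules out the orbit staying in $[2/3,1)$ forever, and even that reduces to observing that the would-be limit contradicts the explicit fixed-point set from Part 1. The numerical endpoints $\tfrac{4}{9}(1+c)$ in Parts 2 and 3 come directly from evaluating $f_c$ at the discontinuity $x=2/3$, so no extra calculation is needed.
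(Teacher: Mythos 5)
Your argument is correct and is essentially the same monotonicity--bounded-orbit scheme the paper has in mind when it dismisses this case with ``Similar to the above cases'' (i.e.\ the method of Theorem \ref{t21}: on the quadratic branch the map is increasing, compare $f_c(x)$ with $x$, use Part~1 to exclude interior fixed points, and pin the first exit point between $f_c(2/3)=\frac{4}{9}(1+c)$ and $\frac{2}{3}$). Two cosmetic points only: at $c=1$ the derivative at $x=1$ is $2-2c=0$, not $>0$ (monotonicity on $[\frac{2}{3},1]$ still holds, and is anyway not essential for Part~3), and for $x^{(0)}=1$ in Part~2 the orbit stays at the fixed point $1$ and never reaches $[\frac{4}{9}(1+c),\frac{2}{3})$ --- a slip in the paper's own statement (compare Theorem \ref{t25}, which writes $x^{(0)}\in[\frac{2}{3},1)$) rather than in your proof.
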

\begin{proof} Similar to the above cases. \end{proof}

\textbf{The case:} $a\neq\frac{1}{2}$, $b\neq\frac{1}{2}$, $c=\frac{1}{2}$.

In this case the function is
\begin{equation}\label{2.5}
f_{a,b}(x)=\left\{
\begin{array}{ll}
(1-2a)x^{2}+2ax,\ \ x\in [0,\frac{1}{3}],\\[2mm]
(1-2b)x^{2}+2bx,\ \ x\in (\frac{1}{3},\frac{2}{3}),\\[2mm]
x,\ \ x\in [\frac{2}{3},1].
\end{array}
\right.
\end{equation}

\begin{thm}\label{t24} For the dynamical system generated by function (\ref{2.5}) the following assertions hold
\begin{itemize}
\item[1.] The set of fixed points is ${\rm Fix}(f_{a,b})=[\frac{2}{3},1]\cup \{0\}$;

\item[2.] If $0\leq a <\frac{1}{2}$, $0\leq b <\frac{1}{2}$ then $\lim_{n\rightarrow\infty}x^{(n)}=0$, $\forall x^{(0)}\in [0,\frac{2}{3})$;

\item[3.] If $0\leq a <\frac{1}{2}$, $\frac{1}{2}<b\leq1$ then $\lim_{n\rightarrow\infty}x^{(n)}=0$, $\forall x^{(0)}\in [0,\frac{1}{3}]$;

\item[4.] If $0\leq a <\frac{1}{2}$, $\frac{1}{2}<b\leq1$ then for any $x^{(0)}\in (\frac{1}{3},\frac{2}{3})$, there exist $n \in \mathbb N$ and  $p \in [\frac{2}{3},\frac{4}{9}(1+b))$ such that $f^{n}(x^{(0)})=p, f^{n+1}(x^{(0)})=f(p)=p$;

\item[5.] If $\frac{1}{2}<a\leq1$, $\frac{1}{2}<b\leq1$ then for any $ x^{(0)}\in (0,\frac{2}{3})$, there exist $n \in \mathbb N$ and $p \in [\frac{2}{3},\frac{4}{9}(1+b))$ such that $f^{n}(x^{(0)})=p, f^{n+1}(x^{(0)})=f(p)=p$;

\item[6.] If $\frac{1}{2}<a\leq1$, $0\leq b <\frac{1}{2}$ then for sets
$$A_{1}=\left[0,\left.\frac{1}{9}(1+4b)\right.\right)\bigcup\left(\left.\frac{1}{9}(1+4a),\frac{2}{3}\right.\right), \ \ A_{2}=\left[\frac{1}{9}(1+4b),\frac{1}{9}(1+4a)\right]$$ the following hold:
\begin{itemize}
\item[1)] $\forall x_0\in A_{1}$  there exists $n_0(x_0)\in \mathbb N$, such that $f^n(x_0)\in A_{2}$ for any $n>n_0$;

\item[2)] $f(A_{2})\subset A_{2}$.
\end{itemize}
\end{itemize}
\end{thm}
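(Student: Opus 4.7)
The plan is to exploit the piecewise structure of $f_{a,b}$ and the elementary identity $f_q(x) - x = x(1-2q)(x-1)$ for the building block $f_q(x) = (1-2q)x^2 + 2qx$, so that on $(0,1)$ the sign of $f_q(x) - x$ coincides with the sign of $1/2 - q$. Together with strict monotonicity of $f_q$ on $[0,1]$ for every $q \in (0,1]$ and the boundary values $f_q(1/3) = (1+4q)/9$, $f_q(2/3) = 4(1+q)/9$, this will let me read off, in each case, where the image of every piece lies relative to $1/3$ and $2/3$.

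For assertion (1) I would just solve $f_{a,b}(x) = x$ on each piece: on the first two pieces the equation factors as $x(1-2q)(x-1) = 0$, which (since $a, b \neq 1/2$) contributes only $x = 0$ from $[0, 1/3]$ and nothing from $(1/3, 2/3)$; the third piece is tautologically fixed.

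For assertions (2)--(5) the trajectory on any piece with $q < 1/2$ is strictly decreasing and on any piece with $q > 1/2$ is strictly increasing; moreover it cannot converge inside an open piece, because the only candidate limits $0$ and $1$ would lie outside. Hence any non-stationary trajectory leaves its current piece in finitely many steps, and the endpoint values $f_q(1/3)$, $f_q(2/3)$ determine which piece it enters next. Concretely, for (2) a trajectory in $(1/3, 2/3)$ strictly decreases, eventually crosses $1/3$ into $[0, 1/3]$ (where the invariance $f_a([0, 1/3]) \subset [0, 1/3]$ holds because $f_a(1/3) = (1+4a)/9 < 1/3$), and then decreases monotonically to the only fixed point $0$; (3) is immediate from that same invariance; for (4) an orbit in $(1/3, 2/3)$ strictly increases, and the first value reaching $[2/3, 1]$ is bounded above by $f_b(2/3) = 4(1+b)/9$, so it lies in $[2/3, 4(1+b)/9)$, where $f_{a,b}$ is the identity; (5) combines (4) with the observation that from $(0, 1/3]$ the increasing $f_a$-orbit must exit at $1/3$ into $(1/3, 2/3)$, reducing to (4).

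The main obstacle is assertion (6), where the two active pieces push in opposite directions. I would first prove invariance (2): splitting $A_2$ at $1/3$, on $[(1+4b)/9, 1/3]$ we apply $f_a$ with $a > 1/2$, and monotonicity together with $f_a(x) > x$ gives $f_a([(1+4b)/9, 1/3]) \subset ((1+4b)/9, (1+4a)/9] \subset A_2$; on $(1/3, (1+4a)/9]$ we apply $f_b$ with $b < 1/2$, and monotonicity together with $f_b(x) < x$ gives $f_b((1/3, (1+4a)/9]) \subset ((1+4b)/9, (1+4a)/9) \subset A_2$. For absorption (1) I would treat the two components of $A_1$ separately. On $[0, (1+4b)/9) \subset [0, 1/3]$ the orbit is governed by $f_a$, hence strictly increasing, and it cannot have a limit in $[0, (1+4b)/9]$ since $0$ is the only fixed point of $f_a$ there and the orbit starts strictly above it; therefore it exceeds $(1+4b)/9$ at some first step $n$, and because $x_{n-1} < (1+4b)/9$, monotonicity of $f_a$ gives $x_n < f_a((1+4b)/9) \leq (1+4a)/9$, so $x_n \in A_2$. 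On $((1+4a)/9, 2/3)$ the orbit is governed by $f_b$, hence strictly decreasing with no possible limit in $(1/3, 2/3)$; at the first step $n$ with $x_n \leq (1+4a)/9$ we have $x_{n-1} > (1+4a)/9 > 1/3$, so monotonicity of $f_b$ gives $x_n = f_b(x_{n-1}) > f_b(1/3) = (1+4b)/9$, placing $x_n \in A_2$. Invariance then keeps the orbit in $A_2$ thereafter. The only book-keeping to be careful about is that the discontinuity points $1/3$ and $2/3$ are not hit exactly during these transitions, which is guaranteed by the strict inequalities $f_q(1/3) \neq 1/3$ and $f_q(2/3) \neq 2/3$ when $q \neq 1/2$.
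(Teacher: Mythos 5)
Your proposal is correct and follows essentially the paper's own route: the sign of $f_q(x)-x$ together with strict monotonicity drives orbits across the pieces, a ``no fixed point before the target interval'' contradiction forces entry in finitely many steps, and endpoint values plus monotonicity give $f(A_2)\subset A_2$; your one-step monotonicity bound at the crossing (e.g. $x_n<f_a\big(\tfrac{1}{9}(1+4b)\big)\le\tfrac{1}{9}(1+4a)$, resp. $x_n>f_b(\tfrac13)=\tfrac{1}{9}(1+4b)$) replaces the paper's explicit second-iterate algebra and is, if anything, cleaner. Two small points to fix in the write-up: in your preamble the sign of $f_q(x)-x$ on $(0,1)$ equals the sign of $q-\tfrac12$, not $\tfrac12-q$ (your subsequent case analysis already uses the correct orientation), and, exactly as in the paper's own proof, item 6.1) tacitly excludes the fixed point $x_0=0\in A_1$ — your phrase ``the orbit starts strictly above it'' is doing that exclusion implicitly.
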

\begin{proof}

Parts 1-5 are similar to corresponding parts of the previous theorems.

6. 1) For $x \in A_1$ we have $x \in [0,\frac{1}{9}(1+4b))$ or $x \in (\frac{1}{9}(1+4a),\frac{2}{3})$. Let's suppose $x \in [0,\frac{1}{9}(1+4b))$. We solve the inequality $f_{a,b}(x)>x$, in this case the function is $f_{a,b}(x)=(1-2a)x^{2}+2ax \Rightarrow (1-2a)x(x-1)>0$ and its solution is any $x \in (0,1)$.
Moreover we have $$ f_{a,b}(x)\in \left(\frac{1}{9}(1+4a),\frac{2}{3}\right) \Rightarrow f_{a,b}(x_1)>x_1, \ \  x_1=f_{a,b}(x)$$
 $$\Rightarrow (1-2a)^2x^2(x-1)((1-2a)x+2a)>0 \Rightarrow x\in \left(1,\frac{2a}{2a-1}\right)$$ this solution does not belong in the interval $[0,1]$.

Let's suppose the trajectory of  $x_0 \in [0,\frac{1}{9}(1+4b))$, i.e., $x_n=f^n(x_0)$ does not go inside of $A_2$, then this trajectory has its own limit because it is an increasing and bounded by $\frac{1}{9}(1+4b)$ (i.e. the left endpoint of $A_2$). But $\frac{1}{9}(1+4b)$ isn't fixed point and there is no any fixed point in the left side of $A_2$. Thus, this trajectory goes inside of $A_2$.

For $x\in (\frac{1}{9}(1+4a),\frac{2}{3})$ we have
$$f_{a,b}(x)<x, \ \ f_{a,b}(x)=(1-2b)x^{2}+2bx \Rightarrow (1-2b)x(1-x)<0 \Rightarrow x\in (0,1).$$
Moreover, if
$f_{a,b}(x)\in (\frac{1}{9}(1+4a),\frac{2}{3})$ then, denoting $x_1=f_{a,b}(x)$, the inequality
$f_{a,b}(x_1)>x_1$ can be reduced to
$(1-2b)^2x^2(x-1)((1-2b)x+2b)>0$ which has solutions $x\in (-\infty,\frac{2b}{2b-1})\bigcup(1,\infty)$. For any point in the right of $A_2$ one can do similar arguments.

2) We prove that $f(x)\in A_2$ for all $x\in A_2$. For $x\in A_2$ we have
$$x\in \left[\frac{1}{9}(1+4b),\frac{1}{3}\right]\bigcup\left(\frac{1}{3},\right.\left.\frac{1}{9}(1+4a)\right].$$
 Let's suppose, $x\in [\frac{1}{9}(1+4b),\frac{1}{3}]$. It is easy to check that, $f(\frac{1}{9}(1+4a))$ and $f(\frac{1}{3})$ are elements of $A_2$. Then $f(x)\in A_2$ because $f(x)$ is monotone increasing. For the second case, i.e. $x\in (\frac{1}{3},\frac{1}{9}(1+4a)]$ we may check similarly. Therefore, $f(A_2)\subset A_2$. Theorem is proved. \end{proof}

Let us study 2-periodic points of the function (\ref{2.5}). It is clear that a 2-periodic orbit $x_1$, $x_2$ exists if they
satisfy the following system of equations:
\begin{equation}\label{pe}
(1-2a)x_1^2+2ax_1=x_2, \ \ (1-2b)x_2^2+2bx_2=x_1.
\end{equation}
The following theorem gives existence conditions of 2-periodic points.
\begin{thm}\label{pt} For each pair $x_1, x_2$ satisfying
\begin{equation}\label{pc}
x_1\in \left(1-{\sqrt{6}\over 3}, {1\over 3}\right), \ \ x_2\in \left({1\over 3}, x_1(2-x_1)\right)
\end{equation}
there exists unique pair $a, b$ such that $x_1, x_2$ is 2-periodic orbit for the function $f_{a,b}$ (given by (\ref{2.5})).
\end{thm}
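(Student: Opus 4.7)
The plan is to use the fact that the prescribed ranges force $x_1$ and $x_2$ into different pieces of $f_{a,b}$, so that the 2-periodicity equations (\ref{pe}) become linear in $a$ and $b$ separately, giving unique explicit formulas; the real work is checking that these formulas return values inside $[0,1]$.

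First I would pin down the location of $x_2$. Since $x_1 \in (0, 1/3)$, one has $x_1(2-x_1) \leq 5/9 < 2/3$, so together with $x_2 > 1/3$ the upper bound places $x_2$ strictly inside the middle piece $(1/3, 2/3)$ of $f_{a,b}$. Hence $f_{a,b}(x_1) = (1-2a)x_1^2 + 2ax_1$ and $f_{a,b}(x_2) = (1-2b)x_2^2 + 2bx_2$, and the system for a 2-periodic orbit is exactly (\ref{pe}). Since each equation is linear in a single parameter with nonzero coefficient $x_i(1-x_i)>0$, the unique solution is
$$a = \frac{x_2 - x_1^2}{2x_1(1-x_1)}, \qquad b = \frac{x_1 - x_2^2}{2x_2(1-x_2)}.$$

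Next I would verify that $a, b \in [0, 1]$. The identities
$$a - \frac{1}{2} = \frac{x_2 - x_1}{2x_1(1-x_1)}, \quad 1 - a = \frac{x_1(2-x_1) - x_2}{2x_1(1-x_1)}, \quad \frac{1}{2} - b = \frac{x_2 - x_1}{2x_2(1-x_2)}$$
give $a \in (1/2, 1)$ from $x_1 < x_2 < x_1(2-x_1)$ and $b < 1/2$ from $x_1 < x_2$. The bound $b \leq 1$, equivalent to $x_1 \leq x_2(2-x_2)$, is automatic, since $x_2 \in (1/3, 2/3)$ gives $x_2(2-x_2) \geq 5/9 > 1/3 > x_1$.

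The one delicate point is $b \geq 0$, equivalent to $x_2^2 \leq x_1$, which does not follow directly from the stated hypothesis. I would deduce it from the auxiliary inequality $x_1(2-x_1) \leq \sqrt{x_1}$ on $(0, 1/3]$: after squaring this reduces to $x_1(2-x_1)^2 \leq 1$, and since the derivative of $x_1 \mapsto x_1(2-x_1)^2$ factors as $(2-x_1)(2-3x_1) > 0$ on $(0, 1/3]$, this function is increasing there with maximum value $25/27 < 1$ at $x_1 = 1/3$. Therefore $x_2 < x_1(2-x_1) < \sqrt{x_1}$, giving $x_2^2 < x_1$ and hence $b > 0$. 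Finally, I would note that the lower endpoint $x_1 > 1 - \sqrt{6}/3$ is precisely the condition $x_1(2-x_1) > 1/3$ ensuring that the $x_2$-interval in (\ref{pc}) is nonempty.
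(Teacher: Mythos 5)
Your proposal is correct and follows essentially the same route as the paper: both solve the system (\ref{pe}) linearly for the parameters to obtain the unique formulas $a=\frac{x_2-x_1^2}{2x_1(1-x_1)}$, $b=\frac{x_1-x_2^2}{2x_2(1-x_2)}$ (the paper's (\ref{ab})) and then identify condition (\ref{pc}) with $1/2<a<1$, $0<b<1/2$. Where the paper merely asserts ``one can see'' this equivalence (after a discriminant discussion aimed at the inverse problem), you supply the actual verification, including the only nontrivial inequality $b>0$ via $x_1(2-x_1)\le\sqrt{x_1}$ on $\left(0,\tfrac13\right]$, which is a welcome completion of the argument.
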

\begin{proof}
Substituting $x_1$ from the second equation of (\ref{pe}) to the first equation we get a polynomial
equation of order 4. Which may have up to four solutions, two of them are $x_2=0$ and $x_2=1$ independently on the values of parameters. These solutions are fixed points. To find 2-periodic (except
fixed) points we divide the polynomial
to $x_2$ and $x_2-1$ and get a quadratic equation, discriminant of which has the following form
$$D=(2a-1)(2b-1)[(2a-1)(2b-1)-4].$$
Denote $t=(2a-1)(2b-1)$. Note that $t<1$ therefore $D=t(t-4)\geq 0$ if and only if $t\leq 0$.
Since we have condition that $a\ne 1/2$ and $b\ne 1/2$, the condition $t< 0$ is satisfied if and only if $a<1/2$, $b>1/2$ or $a>1/2$, $b<1/2$.
It is easy to see that only in the case $a>1/2$, $b<1/2$ there may exist periodic points.

Solving now (\ref{pe}) with respect to $a$ and $b$ we get
\begin{equation}\label{ab}
a={x_2-x_1^2\over 2x_1(1-x_1)}, \ \ b={x_1-x_2^2\over 2x_2(1-x_2)}.
\end{equation}
From these equalities note that for given $x_1$ and $x_2$ the values $a$ and $b$ are uniquely defined.

For (\ref{ab}) one can see that the condition $1/2<a<1$, $0<b<1/2$ is equivalent to the condition (\ref{pc}). Moreover, since (\ref{ab})
is equivalent to (\ref{pe}) we conclude that $x_1$, $x_2$ is 2-periodic orbit for function $f_{a,b}$.

Inversely, for given $a, b$ satisfying $1/2<a<1$, $0<b<1/2$ the exact formula for $x_1$ and $x_2$ is given by
$$x_1=f_{a,b}(x_2), \ \ x_2={t-\sqrt{D}\over 2t}.$$
 \end{proof}
 A numerical analysis shows that if, for example, $x_1=0.2$ and $x_2=0.34$ then  $a=0.9375$, $b=0.188057041$.

\begin{rk} Theorem \ref{pt} gives 2-periodic points on a neighborhood of the discontinuity point $1/3$. In the case $b>1/2$ and $c<1/2$ a similar result can be proved for neighborhood of $2/3$. 

\end{rk}

\textbf{The case:} $a\neq\frac{1}{2}$, $b=\frac{1}{2}$, $c\neq\frac{1}{2}$.

In this case the function is
\begin{equation}\label{2.6}
f_{a,c}(x)=\left\{
\begin{array}{ll}
(1-2a)x^{2}+2ax,\ \ x\in [0,\frac{1}{3}],\\[2mm]
x,\ \ x\in (\frac{1}{3},\frac{2}{3}),\\[2mm]
(1-2c)x^{2}+2cx,\ \ x\in [\frac{2}{3},1]
\end{array}
\right.
\end{equation}

\begin{thm}\label{t25} The dynamical system generated by function (\ref{2.6}) has the following properties
\begin{itemize}
\item[1.] The set of fixed points  is ${\rm Fix}(f_{a,c})=(\frac{1}{3},\frac{2}{3})\cup \{0;1\}$;

\item[2.] If $0\leq a <\frac{1}{2}$, $0\leq c <\frac{1}{2}$ then for any $ x^{(0)}\in [\frac{2}{3},1)$, there exist $n \in \mathbb N$ and $p \in [\frac{4}{9}(1+c),\frac{2}{3})$ such that $f^{n}(x^{(0)})=p, f^{n+1}(x^{(0)})=f(p)=p$;

\item[3.] If $0\leq a <\frac{1}{2}$,  then (independently on $c$) $\lim_{n\rightarrow\infty}x^{(n)}=0$, $\forall x^{(0)}\in [0,\frac{1}{3}]$;

\item[4.] If  $\frac{1}{2}<c\leq1$ then (independently on $a$) $\lim_{n\rightarrow\infty}x^{(n)}=1$, $\forall x^{(0)}\in [\frac{2}{3},1]$;

\item[5.] If $\frac{1}{2}<a\leq1$, $0\leq c <\frac{1}{2}$ then for any $ x^{(0)}\in (0,\frac{1}{3}]$ (resp. $x^{(0)}\in [\frac{2}{3},1)$) there exist $n \in \mathbb N$ and $p \in (\frac{1}{3},\frac{1}{9}(4a+1))$ (resp. $p \in [\frac{4}{9}(1+c),\frac{2}{3})$) such that $f^{n}(x^{(0)})=p, f^{n+1}(x^{(0)})=f(p)=p$;

\item[6.] If $\frac{1}{2}<a\leq1$, $\frac{1}{2}<c\leq1$ then for any $x^{(0)}\in (0,\frac{1}{3}]$, there exist $n \in \mathbb N$ and $p \in (\frac{1}{3},\frac{1}{9}(4a+1)]$ such that  $f^{n}(x^{(0)})=p, f^{n+1}(x^{(0)})=f(p)=p$;

\end{itemize}
\end{thm}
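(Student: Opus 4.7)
The plan is to follow the same template used for Theorems~\ref{t21}--\ref{t24}: since $f_{a,c}$ acts as the identity on the middle interval $(\frac{1}{3},\frac{2}{3})$, any orbit that enters this interval is immediately frozen at a fixed point. So the whole task reduces to describing the behaviour of iterates on the two outer intervals $[0,\frac{1}{3}]$ and $[\frac{2}{3},1]$ under each sign pattern of $a-\frac{1}{2}$ and $c-\frac{1}{2}$, and then gluing the two sides together to read off the six cases.

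First I would dispose of part~1 by solving $f_{a,c}(x)=x$ piecewise: on the middle piece every point is a solution; on $[0,\frac{1}{3}]$ the equation reduces to $(1-2a)x(x-1)=0$, whose only root in that interval is $0$; symmetrically, on $[\frac{2}{3},1]$ the only root is $1$. Next I would record two elementary computations that drive everything else. On $[0,\frac{1}{3}]$ one has $f_{a,c}(x)-x=(1-2a)x(x-1)$ and $f_{a,c}([0,\frac{1}{3}])=[0,\frac{1+4a}{9}]$, with $\frac{1+4a}{9}\le\frac{1}{3}$ iff $a\le\frac{1}{2}$. On $[\frac{2}{3},1]$ one has $f_{a,c}(x)-x=(1-2c)x(x-1)$ and $f_{a,c}([\frac{2}{3},1])=[\frac{4(1+c)}{9},1]$, with $\frac{4(1+c)}{9}\ge\frac{2}{3}$ iff $c\ge\frac{1}{2}$. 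Each of these quantities depends only on the parameter governing that outer piece, which is exactly why parts~3 and 4 hold ``independently'' of the other parameter.

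From these ingredients the six cases fall out as in Theorems~\ref{t21}--\ref{t23}. If $a<\frac{1}{2}$ then $f_{a,c}$ preserves $[0,\frac{1}{3}]$ and satisfies $f_{a,c}(x)<x$ there, so the orbit is monotone decreasing, bounded below by the fixed point $0$, and converges to $0$; this gives part~3. Symmetrically, $c>\frac{1}{2}$ forces $x^{(n)}\to 1$ for $x^{(0)}\in[\frac{2}{3},1]$, giving part~4. In the remaining ``crossing'' regimes ($a>\frac{1}{2}$ on the left, or $c<\frac{1}{2}$ on the right), the relevant outer piece is no longer invariant, and the argument of Theorem~\ref{t21}(3) / Theorem~\ref{t23}(2) applies verbatim: the iteration on that piece is monotone, so if it stayed there forever it would converge to a fixed point of $f_{a,c}$ inside the piece, but the only candidate is an endpoint ($0$ or $1$) from which the monotone orbit is strictly moving away. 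Hence some iterate leaves the piece, and by the image computations above it must land in $(\frac{1}{3},\frac{2}{3})$, where $f_{a,c}$ is the identity; this freezes the orbit at the required $p$. Combining the left-side and right-side verdicts for each sign pattern yields parts~2, 5 and 6.

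The proof involves no real difficulty beyond what is already in Theorems~\ref{t21}--\ref{t24}; the only step that needs a little care is the bookkeeping of the open/closed endpoints of the interval containing $p$, which one reads off from the explicit image intervals $[0,\frac{1+4a}{9}]$ and $[\frac{4(1+c)}{9},1]$ and from whether the defining inequality at $x=\frac{1}{3}$ or $x=\frac{2}{3}$ is strict or not in~(\ref{2.6}).
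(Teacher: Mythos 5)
Your proposal is correct and follows essentially the same route as the paper, which proves Theorem~\ref{t25} by referring back to the monotonicity-plus-freezing argument of Theorems~\ref{t21}--\ref{t24}: on an invariant outer piece the monotone orbit converges to the endpoint fixed point, and on a ``crossing'' piece the monotone orbit must escape (no interior fixed point to converge to) and lands in the identity interval $(\frac{1}{3},\frac{2}{3})$, where it is frozen at the point $p$. Your explicit image computations $[0,\frac{1+4a}{9}]$ and $[\frac{4(1+c)}{9},1]$ are exactly the bookkeeping the paper relies on implicitly.
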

\begin{proof} The proof of this theorem is similar to the proof of Theorem \ref{t24}.\end{proof}

\textbf{The case:} $a=\frac{1}{2}$, $b\neq\frac{1}{2}$, $c\neq\frac{1}{2}$.

In this case the function is
\begin{equation}\label{2.7}
f_{b,c}(x)=\left\{
\begin{array}{ll}
x,\ \ x\in [0,\frac{1}{3}],\\[2mm]
(1-2b)x^{2}+2bx,\ \ x\in (\frac{1}{3},\frac{2}{3}),\\[2mm]
(1-2c)x^{2}+2cx,\ \ x\in [\frac{2}{3},1]
\end{array}
\right.
\end{equation}

\begin{thm}\label{t26} For the dynamical system generated by (\ref{2.7}) we have
\begin{itemize}
\item[1.] The set of fixed points is ${\rm Fix}(f_{b,c})=[0,\frac{1}{3}]\cup \{1\}$;

\item[2.] If $0\leq b <\frac{1}{2}$, $0\leq c <\frac{1}{2}$ then for any $x^{(0)}\in (\frac{1}{3},1)$, there exist $n \in \mathbb N$ and $p \in [\frac{1}{9}(4b+1),\frac{1}{3}]$ such that $f^{n}(x^{(0)})=p, f^{n+1}(x^{(0)})=f(p)=p$;

\item[3.] If $0\leq b <\frac{1}{2}$, $\frac{1}{2}<c\leq1$ then for any $x^{(0)}\in (\frac{1}{3},\frac{2}{3})$, there exist $n \in \mathbb N$ and $p \in [\frac{1}{9}(4b+1),\frac{1}{3}]$ such that $f^{n}(x^{(0)})=p, f^{n+1}(x^{(0)})=f(p)=p$;

\item[4.] If $0\leq b <\frac{1}{2}$, $\frac{1}{2}<c\leq1$ then $\lim_{n\rightarrow\infty}x^{(n)}=1$, $\forall x^{(0)}\in [\frac{2}{3},1]$;

\item[5.] If $\frac{1}{2}<b\leq1$, $\frac{1}{2}<c\leq1$ then $\lim_{n\rightarrow\infty}x^{(n)}=1$, $\forall x^{(0)}\in (\frac{1}{3},1]$;

\item[6.] If  $\frac{1}{2}<b\leq1$, $0\leq c <\frac{1}{2}$, then for sets
$$A_{1}=\left(\frac{1}{3}\right.,\left.\frac{4}{9}(1+c)\right)\bigcup\left(\frac{4}{9}(1+b)\right., 1\bigg],
\ \ A_{2}=\left[\frac{4}{9}(1+c), \frac{4}{9}(1+b)\right]$$ the followings hold:
\begin{itemize}
\item[1)] $\forall x_0\in A_1$ there exists $n_0(x_0)\in \mathbb N$, such that $f^n(x_0)\in A_{2}$  for any $n>n_0$;

\item[2)] $f(A_{2})\subset A_{2}$;
\end{itemize}
\end{itemize}
\end{thm}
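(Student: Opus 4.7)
The plan is to follow the template of Theorems \ref{t21}--\ref{t25}, exploiting that each non-identity branch has the form $g_s(x)=(1-2s)x^2+2sx$ (with $s\in\{b,c\}$), which for $s\ne 1/2$ is strictly increasing on $[0,1]$, has only $0$ and $1$ as fixed points there, satisfies $g_s(x)<x$ on $(0,1)$ when $s<1/2$ and $g_s(x)>x$ on $(0,1)$ when $s>1/2$, and hits the key boundary values $g_s(1/3)=(1+4s)/9$ and $g_s(2/3)=4(1+s)/9$.

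Part 1 comes from solving $f_{b,c}(x)=x$ on each piece: the first piece is the identity, while the others reduce to $(1-2s)x(x-1)=0$, so only $x=0$ (already in $[0,1/3]$) and $x=1$ (in $[2/3,1]$) survive. Parts 2 and 3 reuse the monotone-bounded argument of Theorem \ref{t21}(3): with $b<1/2$, orbits on $(1/3,2/3)$ strictly decrease under $g_b$, and as this strip contains no fixed point, the orbit must cross below $1/3$ in finitely many iterations and freeze in $[(1+4b)/9,\,1/3]$. Part 2 needs one preliminary: orbits in $(2/3,1)$ decrease under $g_c$ and, by the same reasoning (no fixed point of $g_c$ in $(2/3,1)$), eventually drop below $2/3$; since $g_c((2/3,1))=(4(1+c)/9,1)\subset(1/3,1)$, they actually land in $(1/3,2/3)$. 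Parts 4 and 5 are the symmetric picture: for $c>1/2$ the interval $[2/3,1]$ is $g_c$-invariant because $g_c(2/3)=4(1+c)/9>2/3$, and the orbit monotonically converges to the unique fixed point $1$; in Part 5 one first uses $g_b$ with $b>1/2$ (increasing, $g_b(x)>x$) to escape $(1/3,2/3)$ upward into $[2/3,1]$ and then invokes Part 4.

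The main technical content is Part 6. I would split $A_2=[4(1+c)/9,\,2/3)\cup[2/3,\,4(1+b)/9]$. For invariance, on the left half $g_b$ is increasing with $g_b\bigl(4(1+c)/9\bigr)>4(1+c)/9$ (since $b>1/2$) and $g_b(x)<g_b(2/3)=4(1+b)/9$; on the right half $g_c$ is increasing with $g_c(2/3)=4(1+c)/9$ and $g_c\bigl(4(1+b)/9\bigr)<4(1+b)/9$ (since $c<1/2$). Hence $f(A_2)\subset A_2$. For the absorption, on the component $(1/3,\,4(1+c)/9)\subset(1/3,2/3)$ the $b$-branch strictly increases values with no fixed point available, so the orbit must cross $4(1+c)/9$ in finitely many steps and enter $A_2$; on the component $(4(1+b)/9,\,1)\subset(2/3,1]$ the $c$-branch strictly decreases values (with $1$ the only fixed point of $g_c$ in that interval), forcing the orbit to cross below $4(1+b)/9$. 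Since $g_c$ maps $(2/3,1)$ into $(4(1+c)/9,1)$, it cannot undershoot $A_2$.

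The main obstacle I expect is boundary bookkeeping rather than any new idea. The point $x=1$ lies in $A_1$ but is fixed, so the absorption statement must be understood for $x_0\in A_1\setminus\{1\}$. In addition, when an iterate crosses the junction $x=2/3$ between branches, one must verify that it still lands in $A_2$; this is precisely what the two-half invariance check above guarantees. Once these interval inclusions are carried out carefully, the rest is a direct application of the monotone-bounded principle used throughout the paper.
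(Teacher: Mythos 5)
Your proposal is correct and follows essentially the same route as the paper, whose proof of this theorem is simply ``similar to the proof of Theorem \ref{t24}'': fixed points by solving $f(x)=x$ on each branch, monotone bounded orbits with no fixed points available forcing finite-time landing in the target interval, and invariance of $A_2$ checked via monotonicity and the endpoint images $g_b(2/3)=\frac{4}{9}(1+b)$, $g_c(2/3)=\frac{4}{9}(1+c)$. Your extra bookkeeping at the junction $x=2/3$ and your remark that $x_0=1$ is a fixed point contained in $A_1$ (so part 6.1 should be read for $A_{1}\setminus\{1\}$) are sound refinements of that same argument.
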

\begin{proof} Similar to the proof of Theorem \ref{t24}.\end{proof}

\textbf{The case:} $a\neq\frac{1}{2}$, $b\neq\frac{1}{2}$, $c\neq\frac{1}{2}$.

In this case the function is
\begin{equation}\label{2.8}
f_{a,b,c}(x)=\left\{
\begin{array}{ll}
(1-2a)x^{2}+2ax,\ \ x\in [0,\frac{1}{3}],\\[2mm]
(1-2b)x^{2}+2bx,\ \ x\in (\frac{1}{3},\frac{2}{3}),\\[2mm]
(1-2c)x^{2}+2cx,\ \ x\in [\frac{2}{3},1]
\end{array}
\right.
\end{equation}

\begin{thm}\label{t27} The dynamical system generated by function (\ref{2.8}) has the following properties
\begin{itemize}
\item[1.] The set of fixed points is ${\rm Fix}(f_{a,b,c})=\{0;1\}$;

\item[2.] If $0\leq a <\frac{1}{2}$, $0\leq b <\frac{1}{2}$, $0\leq c <\frac{1}{2}$ then $\lim_{n\rightarrow\infty}x^{(n)}=0$, $\forall x^{(0)}\in [0,1]$;

\item[3.] If $0\leq a <\frac{1}{2}$, $0\leq b <\frac{1}{2}$, $\frac{1}{2}< c \leq1$ then
$$\lim_{n\rightarrow\infty}x^{(n)}=\left\{\begin{array}{ll}
0,  \ \ \mbox{if} \ \  x^{(0)}\in [0,\frac{2}{3})\\[2mm]
1, \ \ \mbox{if} \ \  x^{(0)}\in [\frac{2}{3},1];
\end{array}\right.$$
\item[4.] If $0\leq a <\frac{1}{2}$, $\frac{1}{2}<b\leq1$, $0\leq c <\frac{1}{2}$ then $\lim_{n\rightarrow\infty}x^{(n)}=0$, $\forall x^{(0)}\in [0,\frac{1}{3}]$;

\item[5.] If $0\leq a <\frac{1}{2}$, $\frac{1}{2}<b\leq1$, $0\leq c <\frac{1}{2}$ then for sets
$$A_{1}=\left[0, \frac{1}{3}\right], \ \ A_{2}=\left(\frac{1}{3}, \frac{4}{9}(1+c)\right)\bigcup\left(\frac{4}{9}(1+b), 1\right), \ \ A_{3}=\left[\frac{4}{9}(1+c),\frac{4}{9}(1+b)\right]$$  the followings hold:
\begin{itemize}
\item[1)]  $\lim_{n\rightarrow\infty}x^{(n)}=0$  for any $x^{(0)}\in A_{1}$;

\item[2)] $\forall x_0\in A_2$ there exists $n_0(x_0)\in \mathbb N$, such that $f^n(x_0)\in A_{3}$ for any $n>n_0$;

\item[3)] $f(A_{3})\subset A_{3}$;
\end{itemize}

\item[6.] If $0\leq a <\frac{1}{2}$, $\frac{1}{2}<b\leq1$, $\frac{1}{2}<c\leq1$ then
$$\lim_{n\rightarrow\infty}x^{(n)}=\left\{\begin{array}{ll}
0,  \ \ \mbox{if} \ \  x^{(0)}\in [0,\frac{1}{3})\\[2mm]
1, \ \ \mbox{if} \ \  x^{(0)}\in [\frac{1}{3},1];
\end{array}\right.$$

\item[7.] If $\frac{1}{2}<a\leq1$, $0\leq b <\frac{1}{2}$, $0\leq c <\frac{1}{2}$, then for sets
$$B_{1}=\bigg[0, \frac{1}{9}(1+4b)\bigg)\bigcup\bigg(\frac{1}{9}(1+4a), 1\bigg], \ \
B_{2}=\left[\frac{1}{9}(1+4b), \frac{1}{9}(1+4a)\right]$$  the followings hold:
\begin{itemize}
\item[1)] $\forall x_0\in B_1$ there exists $n_0(x_0)\in \mathbb N$, such that $f^n(x_0)\in B_{2}$ for any $n>n_0$;

\item[2)] $f(B_{2})\subset B_{2}$;
\end{itemize}

\item[8.] If $\frac{1}{2}<a\leq1$, $0\leq b <\frac{1}{2}$, $\frac{1}{2}<c\leq1$, then  for sets  $$C_{1}=\bigg[0, \frac{1}{9}(1+4b)\bigg)\bigcup\left(\frac{1}{9}(1+4a), \frac{2}{3}\right), \ \
    C_{2}=\left[\frac{1}{9}(1+4b), \frac{1}{9}(1+4a)\right], \ \
    C_{3}=\left[\frac{2}{3}, 1\right]$$  the followings hold:
\begin{itemize}
\item[1)] $\forall x_0\in C_1$ there exists $n_0(x_0)\in \mathbb N$, such that $f^n(x_0)\in C_{2}$ for any $n>n_0$;

\item[2)] $f(C_{2})\subset C_{2}$;

\item[3)] For $\forall x^{(0)}\in C_{3}$  $\lim_{n\rightarrow\infty}x^{(n)}=1$;
\end{itemize}
\item[9.] If $\frac{1}{2}<a\leq1$, $\frac{1}{2}<b\leq1$, $\frac{1}{2}<c\leq1$, then $\lim_{n\rightarrow\infty}x^{(n)}=1$, $\forall x^{(0)}\in (0,1]$.\\[3mm]
\end{itemize}
\end{thm}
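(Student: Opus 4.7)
The strategy is to exploit the piecewise structure: on each of $[0,1/3]$, $(1/3, 2/3)$, $[2/3,1]$ the map $f_{a,b,c}$ coincides with $g_p(x) := (1-2p)x^2 + 2px$ for the appropriate $p \in \{a,b,c\}$. Three properties of $g_p$ will be used repeatedly: $g_p$ is monotone increasing on $[0,1]$; the equation $g_p(x)=x$ has only the solutions $x\in\{0,1\}$; and $g_p(x)-x = (1-2p)x(x-1)$ has the sign of $(2p-1)$ on $(0,1)$. The relevant one-sided limits at the discontinuities are
\[
f(1/3^-)=\tfrac{1+4a}{9},\ \ f(1/3^+)=\tfrac{1+4b}{9},\ \ f(2/3^-)=\tfrac{4(1+b)}{9},\ \ f(2/3^+)=\tfrac{4(1+c)}{9},
\]
and these four values are exactly the endpoints that appear in the trapping intervals $A_3,B_2,C_2$ of parts 5, 7, 8.

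Part 1 is immediate: on each piece $g_p(x)=x$ forces $x\in\{0,1\}$, and only $0\in[0,1/3]$ and $1\in[2/3,1]$ lie in the correct subintervals, so ${\rm Fix}(f_{a,b,c})=\{0,1\}$. Parts 2, 3, 4, 6, 9 follow a uniform monotone-convergence template: I exhibit an interval $I$ with $f(I)\subset I$ and with either $f(x)<x$ or $f(x)>x$ strictly on $I\setminus\{0,1\}$; the orbit is then strictly monotone and bounded, so it converges to some $L$. Because $f$ has a genuine jump at each of $1/3$ and $2/3$, the limit $L$ cannot equal $1/3$ or $2/3$ (if $x_n\to 1/3$ from a fixed side, then $x_{n+1}$ would converge to the corresponding one-sided limit of $f$ at $1/3$, which differs from $1/3$); hence $L$ is a genuine fixed point, and Part 1 forces $L\in\{0,1\}$. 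The invariance of $I$ in each sub-case is certified by the endpoint table above.

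For the trapping parts (5, 7, 8) the claim splits as (a) $f$-invariance of the trapping interval, and (b) every orbit starting in the transient set enters it in finitely many steps. For (a), I split the trapping interval at its interior discontinuity ($2/3$ for $A_3$ of part 5, and $1/3$ for $B_2,C_2$ of parts 7 and 8) and evaluate $f$ at each sub-endpoint using monotonicity of the relevant $g_p$; the boundary table then makes these values lie inside the trapping interval by direct inspection. For (b), on each connected component of the transient set $f$ is continuous and exactly one of $f(x)<x$, $f(x)>x$ holds; the orbit is therefore strictly monotone on that component, and if it never entered the trapping set it would converge to a fixed point inside a transient component, contradicting Part 1. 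Part 8 additionally includes the convergent region $C_3=[2/3,1]$, which is handled by the same template as Part 9 restricted to $[2/3,1]$.

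The main obstacle is the endpoint bookkeeping that certifies invariance of the trapping sets. For instance in Part 5 with $A_3=[4(1+c)/9,4(1+b)/9]$, one must verify that $g_b$ maps $[4(1+c)/9,2/3)$ into $A_3$ and that $g_c$ maps $[2/3,4(1+b)/9]$ into $A_3$; these reduce to the inequalities $g_b(4(1+c)/9)>4(1+c)/9$ (valid because $b>1/2$) and $g_c(4(1+b)/9)<4(1+b)/9$ (valid because $c<1/2$), combined with the one-sided values $g_b(2/3^-)=4(1+b)/9$ and $g_c(2/3)=4(1+c)/9$ already recorded. Analogous verifications appear in Parts 7 and 8. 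Each is elementary but genuinely uses the sign conditions on $a-1/2,\,b-1/2,\,c-1/2$; once they are in place, the two-principle argument above handles every sub-case.
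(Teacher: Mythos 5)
Your proposal follows essentially the same route the paper intends: the paper's own proof of Theorem \ref{t27} is only the remark ``similar to the above cases,'' i.e.\ it defers to the method of Theorem \ref{t24} --- classify fixed points piecewise, use monotonicity of $g_p(x)=(1-2p)x^2+2px$ and the sign of $g_p(x)-x=(1-2p)x(x-1)$ to get monotone bounded orbits, rule out fixed points in the transient region, and certify invariance of the trapping interval by evaluating $f$ at the endpoints $\frac{1+4a}{9},\frac{1+4b}{9},\frac{4(1+b)}{9},\frac{4(1+c)}{9}$. Your outline reproduces exactly this scheme, and the endpoint bookkeeping you sketch for part 5 (and its analogues for parts 7 and 8) is the correct certificate of invariance.

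Two small points in your part (b) argument need repair, though both are fixable with tools you already use. First, the claim that ``on each connected component of the transient set $f$ is continuous'' fails in part 7: the right component $\left(\frac{1+4a}{9},1\right]$ of $B_1$ contains the discontinuity $2/3$ (since $\frac{1+4a}{9}\le \frac{5}{9}<\frac23$), so the conclusion ``the orbit would converge to a fixed point inside a transient component'' is not immediate --- a decreasing orbit there could a priori converge to $2/3$, which is not a fixed point. You must either invoke your jump argument (if $x_n\downarrow 2/3$ with $x_n\ge 2/3$ then $x_{n+1}=g_c(x_n)\to \frac{4(1+c)}{9}<\frac23$, a contradiction) or run a two-stage descent: first the orbit must leave $[2/3,1)$ because $g_c$ is continuous there with no fixed point in $[2/3,1)$, then it must leave $\left(\frac{1+4a}{9},\frac23\right)$ for the same reason applied to $g_b$. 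Second, ``if it never entered the trapping set'' does not by itself keep the orbit in one transient component: you must also rule out overshooting the trapping interval into the other component. This is again one-sided-limit bookkeeping, e.g.\ in part 7 an orbit leaving $\left[0,\frac{1+4b}{9}\right)$ satisfies $f(x)=g_a(x)<g_a(1/3)=\frac{1+4a}{9}$, and an orbit in $(1/3,2/3)$ satisfies $f(x)=g_b(x)>g_b(1/3)=\frac{1+4b}{9}$ while an orbit in $[2/3,1)$ satisfies $f(x)\ge \frac{4(1+c)}{9}>\frac13>\frac{1+4b}{9}$, so in either direction the first exit lands in $B_2$. (Like the paper, you do not address the boundary anomalies in the statement itself, e.g.\ $x_0\in\{0,1\}\subset B_1$ are fixed points, and in part 6 the point $x^{(0)}=1/3$ belongs to the $a$-piece and actually iterates to $0$; these are defects of the statement rather than of your method.)
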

\begin{proof} Similar to the above cases. \end{proof}

\begin{rk} {\rm Biological interpretations}: Let a biological system consists
$m$ species then each element of an $m$-dimensional simplex can be considered as a
state of the biological system, which is a probability distribution on the set of species consisting $m$ elements.
Therefore, the $i$-th coordinate $x_i^{(n)}$ of the vector $x^{(n)}$ is the probability to see $i$-th specie at the time $n$.
If the limit $\lim_{n\to \infty}x_i^{(n)}=\hat x_i$ exists then it means that the $i$-th specie has asymptotical probability $\hat x_i$.
Consequently, $\hat x_i=0$ biologically means that $i$-th specie will disappear (die) in the population. Moreover, $\hat x_i>0$
means that the $i$-th specie will survive. This general remark can be used to see biological meaning of each results given in above-mentioned
theorems.
\end{rk}


\begin{thebibliography}{99}

\bibitem{D} R.L. Devaney, \textit{An introduction to chaotic dynamical system},
Westview Press, 2003.

\bibitem{GMR} R.N. Ganikhodzhaev, F.M. Mukhamedov, U.A. Rozikov,
Quadratic stochastic operators and processes: results and open problems, Inf. Dim. Anal. Quant.
Prob. Rel. Fields 14(2) (2011) 279--335.

\bibitem{Ke} H. Kesten, Quadratic transformations: a model for
population growth. I, II,  Adv. Appl.Probab, 2 (1970)
1--82; 179--228.

\bibitem{ly} Yu.I. Lyubich, Mathematical structures in population genetics,
 Springer-Verlag, 1992.

\bibitem{MG} F. Mukhamedov, N. Ganikhodjaev,  Quantum quadratic operators and processes, Lecture Notes in Mathematics 2133, Springer, Cham, 2015.

\bibitem{MJP} F.M. Mukhamedov, U.U. Jamilov, A.T. Pirnapasov, \textit{On nonergodic uniform Lotka-Volterra operators}.
Math. Notes \textbf{105}(1-2) (2019),  258-264.

\bibitem{MEN} F.M. Mukhamedov, A.F. Embong, \textit{On stable b-bistochastic quadratic stochastic operators and
associated non-homogenous Markov chains}. Linear Multilinear Algebra \textbf{66}(1) (2018),  1-21.

\bibitem{MEN1} F.M. Mukhamedov, A.F. Embong, C. H. Pah, \textit{Orthogonal preserving quadratic stochastic operators:
 infinite dimensional case}. J. Phys. Conf. Ser. \textbf{819} (2017), 012010, 7 pages.

\bibitem{Rpd} U.A. Rozikov, Population dynamics: algebraic and probabilistic approach. {\sl World Sci. Publ}. Singapore. 2020.

\bibitem{RU} U.A. Rozikov, J.B. Usmonov, Dynamics of a population with two equal dominated species. Qualit. Theory Dyn. Syst. 19(2) (2020) Paper No. 62, 19 pp.

\bibitem{Rb} U.A. Rozikov, \textit{An introduction to mathematical billiards}. World Scientific Publishing Co. Pte. Ltd., Hackensack, NJ, 2019.

\bibitem{RSh} U.A. Rozikov, N.B. Shamsiddinov, \textit{On Non-Volterra quadratic
stochastic operators generated by a product measure}. Stoch. Anal. Appl. \textbf{27} (2) (2009), 353-362.

\bibitem{RZ} U.A. Rozikov, U.U. Zhamilov, \textit{On $F$-quadratic stochastic
operators}.  Math. Notes., \textbf{83}(4) (2008), 554--559.

\bibitem{RZ1} U.A. Rozikov, U.U. Zhamilov, \textit{On dynamics of strictly
non-Volterra quadratic operators on two-dimensional simplex}.
Sbornik: Math. \textbf{200}(9)  (2009), 1339-1351.

\bibitem{UK} J. B.  Usmonov, M. A. Kodirova, \textit{A quadratic stochastic operator with variable coefficients}. 
Bulletin Inst. Math. \textbf{3} (2020),  98-107.


\end{thebibliography}
\end{document}